\newcommand{\removelatexerror}{\let\@latex@error\@gobble}
\DeclarePairedDelimiter{\ceil}{\lceil}{\rceil}
\let\bbordermatrix\bordermatrix
\patchcmd{\bbordermatrix}{8.75}{4.75}{}{}
\patchcmd{\bbordermatrix}{\left(}{\left[}{}{}
    \patchcmd{\bbordermatrix}{\right)}{\right]}{}{}
\newcommand{\real}{\mathbb{R}}
\newcommand{\realnonnegative}{{\mathbb{R}}_{\ge 0}}
\newcommand{\realpositive}{\mathbb{R}_{>0}}
\newcommand{\naturalnumbers}{\mathbb{N}}
\newcommand{\norm}[1]{\ensuremath{\| #1 \|}}
\newcommand{\until}[1]{[#1]}
\newcommand{\map}[3]{#1:#2 \rightarrow #3}
\newcommand{\setdef}[2]{\{#1 \; | \; #2\}}
\newcommand{\argmin}{\operatorname{argmin}}
\newcommand{\myemphc}[1]{\emph{#1}} 
\newcommand{\UUhat}{\widehat{\UU}}
\newcommand{\uhat}{\widehat{u}}
\newcommand{\abs}[1]{|#1|}
\newcommand{\absb}[1]{\Bigl|#1\Bigr|}
\newcommand{\Eb}{\mathbb{E}}
\newcommand{\Pb}{\mathbb{P}}
\renewcommand{\AA}{\mathcal{A}}
\newcommand{\CC}{\mathcal{C}}
\newcommand{\DD}{\mathcal{D}}
\newcommand{\EE}{\mathcal{E}}
\newcommand{\FF}{\mathcal{F}}
\newcommand{\GG}{\mathcal{G}}
\newcommand{\HH}{\mathcal{H}}
\newcommand{\OO}{\mathcal{O}}
\newcommand{\PP}{\mathcal{P}}
\renewcommand{\SS}{\mathcal{S}}
\newcommand{\TT}{\mathcal{T}}
\newcommand{\UU}{\mathcal{U}}
\newcommand{\VV}{\mathcal{V}}
\newcommand{\WW}{\mathcal{W}}
\newcommand{\XX}{\mathcal{X}}
\newcommand{\YY}{\mathcal{Y}}
\newcommand{\CVaR}{\operatorname{CVaR}}
\newcommand{\VaR}{\operatorname{VaR}}
\newcommand{\CVaRhat}{\widehat{\CVaR}}
\newcommand{\Fhat}{\widehat{F}}
\newcommand{\SSCWE}{\mathcal{S}_{\mathtt{CWE}}}
\newcommand{\SSCWEhat}{\widehat{\mathcal{S}}_{\mathtt{CWE}}}
\newcommand{\Db}{\mathbb{D}}
\newcommand{\hhat}{\widehat{h}}
\newcommand{\psihat}{\widehat{\psi}}
\newcommand{\dist}{\operatorname{dist}}
\newcommand{\eps}{\epsilon}
\newcommand{\teehat}{\widehat{t}}
\newcommand{\tee}{t}
\newcommand{\VI}{\operatorname{VI}}
\newcommand{\SOL}{\operatorname{SOL}}
\newcommand{\diam}{\operatorname{diam}}
\newcommand{\vol}{\operatorname{vol}}
\newcommand{\htil}{\tilde{h}}
\newcommand{\tb}{\bar{t}}
\newcommand{\oprocendsymbol}{\hbox{$\bullet$}}
\newcommand{\oprocend}{\relax\ifmmode\else\unskip\hfill\fi\oprocendsymbol}
\newcommand{\longthmtitle}[1]{\mbox{}\textup{\textsl{(#1):}}}
\newcommand{\ifinclude}[1]{}
\renewcommand{\ifinclude}[1]{#1}
\newcommand{\thickhline}{%
  \noalign {\ifnum 0=`}\fi \hrule height 1pt
  \futurelet \reserved@a \@xhline
}
\newcolumntype{"}{@{\hskip\tabcolsep\vrule width 1pt\hskip\tabcolsep}}
\newtheorem{theorem}{Theorem}[section]
\newtheorem{proposition}[theorem]{Proposition}
\newtheorem{lemma}[theorem]{Lemma}
\theoremstyle{definition}
\newtheorem{definition}[theorem]{Definition}
\newtheorem{assumption}[theorem]{Assumption}
\newtheorem{remark}[theorem]{Remark}
\definecolor{new}{rgb}{0.55,0,0.55}
\title{Sample average approximation of CVaR-based Wardrop equilibrium \\ in routing under uncertain costs} 
\author{Ashish Cherukuri \thanks{The author is with the Engineering and Technology Institute Groningen, University of Groningen. Email: \texttt{a.k.cherukuri@rug.nl.}}}
\begin{document}

\maketitle
\thispagestyle{empty}
\pagestyle{empty}

\begin{abstract}
	This paper focuses on the class of routing games that have uncertain costs. Assuming that agents are risk-averse and select paths with minimum conditional value-at-risk (CVaR) associated to them, we define the notion of CVaR-based Wardrop equilibrium (CWE). We focus on computing this equilibrium under the condition that the distribution of the uncertainty is unknown and a set of independent and identically distributed samples is available. To this end, we define the sample average approximation scheme where CWE is estimated with solutions of a variational inequality problem involving sample average approximations of the CVaR.  We establish two properties for this scheme. First, under continuity of costs and boundedness of uncertainty, we prove asymptotic consistency, establishing almost sure convergence of approximate equilibria to CWE as the sample size grows. Second, under the additional assumption of Lipschitz cost, we prove exponential convergence where the probability of the distance between an approximate solution and the CWE being smaller than any constant approaches unity exponentially fast. Simulation example validates our theoretical findings. 
\end{abstract}

\section{Introduction}\label{sec:intro}
Users of a transportation network are often selfish, minimizing their own cost function, such as travel time, when traversing through the network. This phenomenon is popularly modeled as a (deterministic) nonatomic routing game where the number of users is assumed to be large, each controlling infinitesimal amount of flow in the network. Therefore, an individual user does not affect the cost incurred on a path by unilaterally changing its route choice. At an equilibrium of this game, termed Wardrop equilibrium (WE)~\cite{wardrop1952some}, paths with nonzero flow have the least cost among all the alternatives. In real-life, the cost associated to a path is uncertain, affected by unplanned events such as, accidents, weather fluctuations, and construction work. Different users might minimize different objectives under this uncertainty, e.g., expected cost, specific quantile of the cost, or a risk measure. These disparate behaviors lead to different notions of equilibrium. Computing these equilibria and analyzing their properties help predict congestion patterns. Motivated by this, our goal here is to estimate the equilibria, using samples of the uncertainty, when agents are risk-averse and seek paths that have minimum conditional value-at-risk. 

\subsubsection*{Literature review}

In a routing setup, experimental studies~\cite{monnot2017routing,lam2016learning} validate the fact that agents arrive at some equilibrium path choice after repeated interaction with each other. Traditionally, equilibrium flow was predicted in a deterministic setup under the formalism of WE~\cite{correa2011wardrop}. 
Such predictions were used for designing tolls~\cite{brown2017studies} and planning for future transportation infrastructure~\cite{florian1999untangling}. When costs are uncertain, no single traffic flow works as a WE for all realizations of the uncertainty.  
Therefore, to estimate equilibrium flow, some works solve a stochastic nonlinear complementarity problem \cite{shanbhag2013tutorial}, either in an expectation basis \cite{zhang2011robust,XC-RJBW-YZ:12} or in a robust manner \cite{ordonez2010wardrop,xie2016robust}. In~\cite{JD-ARH-AC:19-ecc}, equilibrium flow is hypothesized to be the minimizer of the regret experienced by users. These approaches might be too conservative or might not account for the risk-sensitive behavior of agents. 
Among the works that consider risk,~\cite{nikolova2014mean-risk} and~\cite{prakash2018risk-averse} consider the cost to be the weighted sum of the mean and the variance of the uncertain cost. Further, for this risk criteria,~\cite{nikolova2015burden} introduces the notion of price of risk aversion and~\cite{nikolova2018informs} determines tighter bounds for it. 
In the transportation literature, the CVaR-based equilibrium is also known as the mean excess traffic equilibrium, see e.g.,~\cite{chen2010alpha-reliable,xu2017lmete} and references therein. While these works have explored numerous algorithms for computing the equilibrium, they lack theoretical performance guarantees for sample-based solutions. This paper attempts to bridge this gap. 

On the technical side, our paper relates to the body of work on sample average approximation, see~\cite[Chapter 5]{shapiro2014lectures} for a detailed overview. In particular, we borrow ideas from studies on sample average approximation of stochastic variational inequalities~\cite{xu2010saastochasticvi}, generalized equations~\cite{xu2010uniformexp}, and mathematical programs with equilibrium constraints~\cite{shapiro2008saa-empec}.

\subsubsection*{Setup and contributions} 
Our starting point is the definition of the nonatomic routing game where agents aim to minimize the conditional value-at-risk (CVaR) of the uncertain cost associated with each path. We assume that the demand is fixed and deterministic, the cost functions are continuous, and the support of the uncertainty is bounded. At an equilibrium of this game, termed CVaR-based Wardrop equilibrium (CWE), paths with nonzero flow have lowest CVaR.  Given a certain number of independent and identically distributed samples of the uncertainty, we define sample average approximation of the CVaR by replacing the expectation operator with its sample average. Subsequently, we formulate a variational inequality (VI) problem using these approximate costs. Our aim then is to study the statistical properties of the solutions of this approximate VI problem as the number of samples grow. In particular our contributions are twofold:
\begin{enumerate}
	\item We show that as the sample size grows, the set of solutions of the approximate VI problem converge almost surely, in a set-valued sense, to the set of CWE.
	\item Under the additional assumption that the costs are Lipschitz continuous, we establish the exponential convergence of the approximate solutions to the set of CWE. That is, given any constant, the probability that the distance of an approximate solution from the set of CWE is less than that constant approaches unity exponentially with number of samples. 
\end{enumerate}
We provide a simple simulation example illustrating these guarantees. 

\section{Notation and preliminaries}\label{sec:prelims}

Let $\real$, $\realnonnegative$, $\realpositive$, and $\naturalnumbers$ denote the set of real, real nonnegative, real positive, and natural numbers, respectively. Let $\norm{\cdot}$ denote the $2$-norm. We use $\until{N}:=\{1, \dots, N\}$ for $N \in \naturalnumbers$. For $x \in \real$, we let $[x]_+ = \max(x,0)$, $[x]_{-} = \min(x,0)$, and $\ceil{x}$ be the smallest integer greater than or equal to $x$. The cardinality of a set $\SS$ is denoted by $\abs{\SS}$. The distance of a point $x \in \real^m$ to a set $\SS \subset \real^m$ is denoted as $\dist(x,\SS) := \inf_{y \in \SS} \norm{x-y}$. The \myemphc{deviation} of a set $\AA \subset \real^m$ from $\SS$ is $\Db(\AA,\SS):= \sup_{y \in \AA} \dist(y,\SS)$.

\subsubsection{Variational inequality} 
Given a map $\map{F}{\real^n}{\real^n}$ and a closed set $\HH \subset \real^n$, the \myemphc{variational inequality} (VI) problem, denoted $\VI(F,\HH)$, involves finding $h^* \in \HH$ such that $(h-h^*)^\top F(h^*) \ge 0$ for all $h \in \HH$. Such a point is called a \myemphc{solution} of the VI problem. The set of solutions of $\VI(F,\HH)$ are denoted by $\SOL(F,\HH)$.

\subsubsection{Graph theory} 
A \myemphc{directed graph} is a pair $\GG = (\VV,\EE)$, where $\VV$ is a finite set called the \myemphc{vertex set} or \myemphc{node set}, $\EE \subseteq \VV \times \VV$ is called the \myemphc{edge set}, where $(i,j) \in \EE$ if there is a directed edge from vertex $i$ to $j$. A \myemphc{path} is an ordered sequence of unique vertices such that two subsequent vertices form an edge. A \myemphc{source} is a vertex with no incoming edge and a \myemphc{sink} is a vertex with no outgoing edge. 
 
\subsubsection{Uniform convergence}
A sequence of functions $\{\map{f_N}{\XX}{\YY}\}_{N=1}^\infty$, where $\XX$ and $\YY$ are Euclidean spaces, is said to \myemphc{converge uniformly} on a set $X \subset \XX$ to $\map{f}{\XX}{\YY}$ if for any $\eps > 0$, there exists $N_\eps \in \naturalnumbers$ such that
\begin{align*}
	\sup_{x \in X} \norm{f_N(x) - f(x)} \le \eps, \, \text{ for all } \, N \ge N_\eps.
\end{align*}
Similar definition applies for convergence in probability. That is, consider a random sequence of function $\{\map{f_N^\omega}{\XX}{\YY}\}_{N=1}^\infty$ defined on a probability space $(\Omega, \FF, P)$. The sequence is said to \myemphc{converge uniformly} to $\map{f}{\XX}{\YY}$ on $X$ \myemphc{almost surely} (shorthand, a.s.) if  $f_N^\omega \to f$ uniformly on $X$ for almost all $\omega \in \Omega$. 

\subsubsection{Risk measures}
Next we review notions on value-at-risk and conditional value-at-risk following~\cite{shapiro2014lectures}. Given a real-valued random variable $Z$ with probability distribution $\Pb$, we denote the \myemphc{cumulative distribution} function by $H_Z(\zeta):=\Pb(Z \le \zeta)$. The \myemphc{left-side $\alpha$-quantile} of $Z$ is defined as $H_Z^{-1}(\alpha) := \inf \setdef{\zeta}{H_Z(\zeta) \ge \alpha}$.
Given a probability level $\alpha \in (0,1)$, the \myemphc{value-at-risk} of $Z$ at level $\alpha$, denoted $\VaR_\alpha[Z]$, is the left-side $(1-\alpha)$-quantile of $Z$. Formally, 
\begin{align*}
	\VaR_\alpha[Z]  := H_Z^{-1}(1-\alpha) & = \inf \setdef{\zeta}{\Pb(Z \le \zeta) \ge 1-\alpha}
	\\
	 & = \inf \setdef{\zeta}{\Pb(Z > \zeta) \le \alpha}.
\end{align*}
The \myemphc{conditional value-at-risk (CVaR)}, also referred to as the average value-at-risk in~\cite{shapiro2014lectures}, of $Z$ at level $\alpha$, denoted $\CVaR_\alpha[Z]$, is the expectation of $Z$ when it takes values bigger than $\VaR_\alpha[Z]$. That is, 
\begin{align}\label{eq:expected-shortfall}
	\CVaR_\alpha [Z] := \Eb[Z \ge \VaR_\alpha[Z]].
\end{align}
One can show that, equivalently, 
\begin{align}\label{eq:cvar-def-alt}
	\CVaR_\alpha [Z] = \inf_{t \in \real} \Bigl\{ t + \frac{1}{\alpha} \Eb[Z - t]_+ \Bigr\}.
\end{align}
The parameter $\alpha$ characterizes the risk-averseness. When $\alpha$ is close to unity, the decision-maker is risk-neutral, whereas, $\alpha$ close to the origin implies high risk-averseness. The minimum in~\eqref{eq:cvar-def-alt} is attained at a point in the interval $[t^m,t^M]$, where $t^m : = \inf \setdef{\zeta}{H_Z(\zeta) \ge 1-\alpha}$, and $t^M := \sup \setdef{\zeta}{H_Z(\zeta) \le 1-\alpha}$.

\section{Routing game with uncertain costs} \label{sec:network-uncertain}

Consider a \myemphc{network} represented using a directed graph $\GG = (\VV, \EE)$, where $\VV$ and $\EE \subseteq \VV \times \VV$ stand for the set of nodes and edges, respectively. Here $\VV$ and $\EE$, for instance, model the sets of intersections and streets in a city when $\GG$ is a traffic network.  The sets of \myemphc{origin} and \myemphc{destination} nodes are the sets of sources and sinks in the network, and are denoted by $\OO$ and $\DD$, respectively. The set of \myemphc{origin-destination (OD) pairs} is $\WW \subseteq \OO \times \DD$. Let $\PP_w$ denote the set of available \myemphc{paths} for the OD pair $w \in \WW$ and let $\PP = \cup_{w \in \WW} \PP_w$ be the set of all paths, see Section~\ref{sec:prelims} for relevant definitions. 
We assume that numerous agents traverse through the network in a noncooperative manner. This framework is modeled as a nonatomic routing game where each individual agent's action has infinitesimal impact on the aggregate traffic flow. As a consequence, flow is modeled as a continuous variable. We assume that each agent is associated with an OD pair $w \in \WW$ and is allowed to select any path $p \in \PP_w$.  The route choices of all agents give rise to the aggregate traffic which is modeled as a \myemphc{flow vector} $h \in \realnonnegative^{\abs{\PP}}$ with $h_p$ being the \myemphc{flow} on a path $p \in \PP$. The flow between each OD pair must satisfy the travel demand. We denote the \myemphc{demand} for the OD pair $w \in \WW$ by $d_w \in \realnonnegative$ and the set of \myemphc{feasible flows} by 
\begin{equation*}
\HH := \Bigl\{h \in \realnonnegative^{\abs{\PP}} \Big| \sum_{p \in \PP_w} h_p = d_w \text{ for all } w \in \WW \Bigr\}.
\end{equation*}
Agents who choose path $p \in \PP$ experience a nonnegative uncertain \myemphc{cost} denoted as $\map{C_p}{\realnonnegative^{\abs{\PP}} \times \real^m}{\realnonnegative}$, $(h,u) \mapsto C_p(h,u)$, where $u \in \real^m$ models the \myemphc{uncertainty}. That is, the cost on a given path depends on the flow on all paths and also on a random variable. To be more precise about the uncertainty, let $(\Omega,\FF,P)$ be a probability space and $u$ be a random vector mapping into $(\real^m,B_\sigma(\real^m))$, where $B_\sigma(\real^m)$ is the Borel $\sigma$-algebra on $\real^m$. Let $\Pb$ and $\UU \subset \real^m$ be the distribution and support of $u$, respectively. We assume that $\UU$ is compact. For the cost function, we assume that for every $p \in \PP$ and $u \in \UU$, the function $h \mapsto C_p(h,u)$ is continuous. In addition, for every $p \in \PP$ and $h \in \HH$, the function $u \mapsto C_p(h,u)$ is measurable with respect to $B_\sigma(\real^m)$ and for a fixed $h \in \HH$, either $\Eb_{\Pb}[C_p(h,u)]_+$ or $\Eb_{\Pb}[C_p(h,u)]_{-}$ is finite. Here, $\Eb_{\Pb}[\, \cdot \,]$ denotes the expectation under $\Pb$ and $[\, \cdot \,]_+$ and $[\, \cdot \, ]_{-}$ denote the positive and negative parts, respectively. Additional assumptions on the cost functions will be made wherever necessary. Collecting the above described elements, a \myemphc{routing game with uncertain costs} is defined by the tuple $(\GG, \WW, \PP, C, d, \UU, \Pb)$. Note that since the cost is uncertain, one needs to assign an appropriate objective for agents which in turn defines a notion of equilibrium. In this work, we assume that agents are risk-averse and look for paths with least conditional value-at-risk (cf. Section~\ref{sec:prelims}). We assume that all agents have the same risk-aversion charecterized by the parameter $\alpha \in (0,1)$. This assumption eases notational burden and our results do hold for the general case with heterogeneous risk-aversion. Using the form~\eqref{eq:cvar-def-alt}, the $\CVaR$ associated to path $p$ as a function of the flow is
\begin{equation}\label{eq:cvar-true-cp}
\CVaR_\alpha[C_p(h,u)] = \inf_{t \in \real} \Bigl\{ t + \frac{1}{\alpha} \Eb_\Pb \left[ C_p(h,u)-t \right]_+ \Bigr\}.
\end{equation}
The notion of equilibrium then is that of Wardrop~\cite{wardrop1952some}, where the cost associated to a path is its $\CVaR$. 
\begin{definition}\longthmtitle{Conditional value-at-risk based Wardrop equilibrium (CWE)}\label{def:cwe}
	A flow vector $h^* \in \realnonnegative^{\abs{\PP}}$ is called a \myemphc{CVaR-based Wardrop equilibrium (CWE)} for the routing game with uncertain costs $(\GG, \WW, \PP, C, d, \UU, \Pb)$ if: (i) $h^*$ satisfies the demand for all OD pairs and (ii) for any OD pair $w$, a path $p \in \PP_w$ has nonzero flow if the CVaR of path $p$ is minimum among all paths in $\PP_w$. Formally, $h^*$ is a CWE if $h^* \in \HH$ and $h_p^* > 0$ for $p \in \PP_w$ only if
	\begin{align}\label{eq:CVaR-eq-regret}
	\CVaR_\alpha [C_p(h^*,u)] \le \CVaR_\alpha [C_q(h^*,u)], \quad \forall q \in \PP_w.
	\end{align}
	We denote the set of CWE by $\SSCWE \subset \HH$.
	\oprocend
\end{definition}
One can verify that the set $\SSCWE$ is equivalent to the set of solutions to the variational inequality (VI) problem $\VI(F,\HH)$ (see Section~\ref{sec:prelims} for relevant notions)~\cite{MJS:79}, where 
\begin{align*}
F_p(h) := \CVaR_\alpha [C_p(h,u)],
\end{align*}
for all $p \in \PP$. Note that the set $\HH$ is compact and convex. Further, the map $h \mapsto F(h)$ is continuous since $C_p$, $p \in \PP$ are so~\cite[Theorem 2]{rockafellar2007coherent}. Therefore, the set of solutions $\SOL(F,\HH)$ is nonempty and compact~\cite[Corollary 2.2.5]{pangVIbook-1}.  Consequently, the set $\SSCWE$ is nonempty and compact. 

The set of CWE predict flow patterns when costs are uncertain and agents behave in a risk-averse way, in particular, they minimize $\CVaR$. To compute this set, one requires to know the probability distribution $\Pb$ of the uncertainty along with the cost functionals and the fixed demand. In real-life, $\Pb$ is unknown and the decision-maker has only access to samples of the uncertainty. The objective of this paper is to provide a method to approximate the set of CWE using available samples. To this end, we define the sample average based (deterministic) approximate VI problem that acts as a surrogate to the VI problem defining the CWE. We will then study statistical properties, that is, consistency and exponential convergence, of the solutions of this approximate VI problem. Note that solving the deterministic approximate VI problem efficiently is a valid research question on its own and is not considered in the scope of this paper.

\section{Sample average approximation of CWE}

The approach in the sample average framework is to replace the expectation operator in any problem with the average over the obtained samples~\cite{shapiro2014lectures}. This is one of the main Monte Carlo methods for problems with expectations; see~\cite{demello2014survey} for a detailed survey of other sample-based techniques. In our setup, for each path of the network, we will replace the expectation operator in the definition of the $\CVaR$ of each path~\eqref{eq:cvar-true-cp} with the sample average. The thus formed set of functions result into a $\VI$ problem that approximates $\VI(F,\HH)$. 

Let $\UUhat^N := \{\uhat^1, \uhat^2, \dots, \uhat^N\}$ be the set of $N \in \naturalnumbers$ independent and identically distributed samples of the uncertainty $u$ drawn from $\Pb$. Then, the sample average approximation of the $\CVaR$ associated to path $p \in \PP$ is  
\begin{equation}\label{eq:cvar-N-cp}
\CVaRhat^N_\alpha [C_p(h,u)] := \inf_{t \in \real} \Bigl\{t + \frac{1}{N \alpha} \sum_{i=1}^N [C_p (h,\uhat^i) - t]_+ \Bigr\}.
\end{equation}
The above expression is also known as the empirical estimate of the $\CVaR$, or empirical $\CVaR$ in short. Note that the operator $\CVaRhat^N_\alpha$ is random as it depends on the realization $\UUhat^N$ of the uncertainty. Different set of samples form different set of functionals. To emphasize this dependency on the uncertainty, we represent with $\widehat{\, \cdot \,}^N$ entities that are random, dependent on the obtained samples. Using~\eqref{eq:cvar-N-cp} as the approximate cost, define the (sample-dependent) approximate variational inequality problem as $\VI(\Fhat^N,\HH)$, where 
\begin{equation*}
\Fhat^N_p (h) := \CVaRhat^N_\alpha [C_p(h,u)],
\end{equation*}
for all $p \in \PP$. We denote the set of solutions of $\VI(\Fhat^N,\HH)$ by $\SSCWEhat^N \subset \HH$. This serves as a reminder that it approximates $\SSCWE$. The notion of approximation is made precise next.

\begin{definition}\longthmtitle{Asymptotic consistency and exponential convergence}
	The set $\SSCWEhat^N$ is an asymptotically consistent approximation of $\SSCWE$, or in short, $\SSCWEhat^N$ is asymptotically consistent, if any sequence of solutions $\{\hhat^N \in \SSCWEhat^N\}_{N=1}^\infty$ has almost surely (a.s.) all accumulation points in  $\SSCWE$.  The set $\SSCWEhat^N$ is said to converge exponentially to $\SSCWE$ if for any $\eps > 0$, there exist positive constants $c_\eps$ and $\delta_\eps$ such that for any sequence $\{ \hhat^N \in \SSCWEhat^N\}_{N=1}^\infty$, the following holds
	\begin{align}\label{eq:exp-bound-conv}
	\Pb^N \Bigl(\dist(\hhat^N, \SSCWE) \le \eps \Bigr) \ge 1-c_\eps e^{-\delta_\eps N}
	\end{align}
	for all $N \in \naturalnumbers$. 
	\oprocend
\end{definition}
The asymptotic consistency of $\SSCWEhat^N$ is equivalent to saying $\Db(\SSCWEhat^N,\SSCWE) \to 0$ a.s. as $N \to \infty$. The expression~\eqref{eq:exp-bound-conv} gives a precise rate for this convergence. In our work, all convergence results are for $N \to \infty$ and so we drop restating this fact for convenience's sake.
In the following sections, we will establish the asymptotic consistency and the exponential convergence of $\SSCWEhat^N$ under suitable assumptions.
\begin{remark}\longthmtitle{Existing sample average approximations to $\CVaR$ and stochastic VI}\label{re:comparison}
	{\rm The works~\cite{meng2010saacvar} and~\cite{sun2014saa} study stochastic optimization problems where $\CVaR$ is either being minimized or used to define the constraints. Both employ the sample average approximation as proposed in~\eqref{eq:cvar-N-cp} and study asymptotic consistency and exponential convergence of the Karush-Kuhn-Tucker (KKT) points. Since $\CVaR$ is used to define a VI problem in our case, the analysis does not follow directly from these existing results. Moreover, the exponential bounds derived here are explicit, without involving ambiguous constants, than the general large deviation bounds provided in~\cite{meng2010saacvar} and~\cite{sun2014saa}. In another data-based approach~\cite{ramponi2018expectedshortfall}, the $\CVaR$ is perceived as the expected shortfall~\eqref{eq:expected-shortfall} and desirable statistical guarantees are obtained for the optimizers of its sample average. 
	}
	\oprocend
\end{remark}

\subsection{Asymptotic consistency of $\SSCWEhat^N$}\label{subsec:consistency}
We begin with stating the bound on the optimizers of the problem defining the $\CVaR$~\eqref{eq:cvar-true-cp} and the empirical $\CVaR$~\eqref{eq:cvar-N-cp}. This restricts our attention to compact domains for variables $(h,t,u)$, a property useful in showing consistency. Denote for each $p \in \PP$, functions
\begin{subequations}\label{eq:psi-maps}
	\begin{align}
	\psi_p(h,t) & := t + \frac{1}{\alpha} \Eb_\Pb [C_p(h,u)-t]_+,
	\\
	\psihat^N_p(h,t) & := t + \frac{1}{N \alpha}  \sum_{i=1}^N [C_p(h,\uhat^i) - t]_+.
	\end{align}
\end{subequations}
The map $\psihat^N_p$ is the sample average of $\psi_p$. Given our assumption that the expected value of the cost $C_p$ is bounded for any $h \in \HH$, one can deduce by strong law of large numbers~\cite{durrett2010book} that for any fixed $(h,t) \in \HH \times \real$, almost surely, $\psihat^N_p(h,t) \to \psi_p(h,t)$. We however require uniform convergence of these maps to conclude consistency, which will be established in Theorem~\ref{th:asymptotic} below. Observe that, by definition, $\CVaR_\alpha [C_p(h,u)] = \inf_{t \in \real} \psi_p(h,t)$ and $\CVaRhat^N_\alpha[C_p(h,u)] = \inf_{t \in \real} \psihat^N_p(h,t)$. The following result gives explicit bounds on the optimizers of these problems.  
\begin{lemma}\longthmtitle{Bounds on optimizers of problems defining (empirical) $\CVaR$}\label{le:cvar-opt-compact}
	For any $h \in \HH$ and $p \in \PP$, the optimizers of the problems in~\eqref{eq:cvar-true-cp} and~\eqref{eq:cvar-N-cp} exist and belong to the compact set $\TT = [m,M]$, where
	\begin{align*}
	m & := \min \setdef{C_p(h,u)}{h \in \HH, u \in \UU, p \in \PP},
	\\
	M & := \max \setdef{C_p(h,u)}{h \in \HH, u \in \UU, p \in \PP}.
	\end{align*}
	Furthermore, the set of functions 
	\begin{align}\label{eq:phi-def}
	\phi_p (h,t,u) := t + \frac{1}{\alpha} [C_p(h,u)-t]_{+},
	\end{align}
	for $p \in \PP$, satisfy for all $(h,t,u) \in \HH \times \TT \times \UU$, 
	\begin{align}\label{eq:phi-bound}
	\phi_p(h,t,u) \in \Bigl[ m, m + \frac{M-m}{\alpha} \Bigr].
	\end{align}
\end{lemma}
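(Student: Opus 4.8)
The plan is to exploit the convexity in $t$ of the maps $\psi_p(h,\cdot)$ and $\psihat^N_p(h,\cdot)$ defining the (empirical) $\CVaR$, together with the fact that for the fixed pair $(h,p)$ all relevant cost values are confined to $[m,M]$. First I would record that $m$ and $M$ are well defined and finite: $\HH$ is compact, $\UU$ is compact by assumption, $\PP$ is finite, and $h \mapsto C_p(h,u)$ is continuous, so $C_p$ attains finite extreme values over $\HH \times \UU \times \PP$; in particular $m \le C_p(h,u) \le M$ for every admissible triple, and since the samples $\uhat^i$ lie in $\UU$, the same two-sided bound holds for each $C_p(h,\uhat^i)$.

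For the optimizer bounds I would note that $\psi_p(h,\cdot)$ is convex and continuous in $t$ (convexity since $t \mapsto [C_p(h,u)-t]_+$ is convex; continuity by dominated convergence, the integrand being $1$-Lipschitz in $t$ and locally bounded on $\UU$). The key step is the behaviour outside $\TT$: for $t \ge M$ one has $[C_p(h,u)-t]_+ = 0$ for all $u \in \UU$, so $\psi_p(h,t)=t$ is strictly increasing; for $t \le m$ one has $[C_p(h,u)-t]_+ = C_p(h,u)-t$, so $\psi_p(h,t) = \tfrac{1}{\alpha}\Eb_\Pb[C_p(h,u)] + (1-\tfrac{1}{\alpha})t$ is strictly decreasing because $\alpha \in (0,1)$ forces $1-\tfrac{1}{\alpha} < 0$. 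Hence the infimum over $\real$ coincides with the infimum over the compact interval $\TT=[m,M]$, on which a continuous function attains its minimum; this gives existence of an optimizer inside $\TT$. The identical argument applies verbatim to $\psihat^N_p(h,\cdot)$, replacing the expectation by the sample average and using $C_p(h,\uhat^i) \in [m,M]$.

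For the bound on $\phi_p$ the lower estimate is immediate: the penalty term is nonnegative and $t \ge m$ on $\TT$, so $\phi_p(h,t,u) \ge t \ge m$. For the upper estimate I would again use that $t \mapsto \phi_p(h,t,u)$ is convex, so on $\TT$ its maximum is attained at an endpoint. At $t=M$ the penalty vanishes, giving $\phi_p(h,M,u)=M$; at $t=m$ we get $\phi_p(h,m,u)=m+\tfrac{1}{\alpha}(C_p(h,u)-m) \le m+\tfrac{M-m}{\alpha}$ using $C_p(h,u) \le M$. Since $\alpha \in (0,1)$ yields $M = m+(M-m) \le m+\tfrac{M-m}{\alpha}$, the larger endpoint value is $m+\tfrac{M-m}{\alpha}$, which is exactly the claimed upper bound.

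The main obstacle is the upper bound in the second part: the naive estimate $\phi_p(h,t,u) = t + \tfrac{1}{\alpha}[C_p(h,u)-t]_+ \le M + \tfrac{M-m}{\alpha}$ is too loose, and one must exploit the trade-off that a large $t$ annihilates the penalty term, which is precisely what the convexity-plus-endpoint argument together with $\alpha < 1$ captures. A secondary point that needs care is the well-definedness and finiteness of $m$ and $M$, which relies on combining compactness of $\HH$ and $\UU$ with the continuity of the costs in $h$; once this is in place, the remaining calculations are the elementary case analyses sketched above.
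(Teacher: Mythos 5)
Your proof is correct, and its first half takes a genuinely different route from the paper's. For existence and location of the optimizers, the paper simply invokes the quantile characterization of $\CVaR$ minimizers from~\cite[Chapter 6]{shapiro2014lectures}: minimizers lie between the left- and right-side $(1-\alpha)$-quantiles of the random cost, and these quantiles are themselves values in $[m,M]$. You instead argue from first principles: since all cost values lie in $[m,M]$, the objective $\psi_p(h,\cdot)$ equals $t$ (strictly increasing) for $t \ge M$ and equals $\frac{1}{\alpha}\Eb_\Pb[C_p(h,u)] + (1-\frac{1}{\alpha})t$ (strictly decreasing, since $1-\frac{1}{\alpha}<0$) for $t \le m$, so no minimizer can lie outside $\TT=[m,M]$, while continuity on the compact interval $\TT$ gives attainment; the same argument applies verbatim to $\psihat^N_p$. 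What this buys is self-containedness---no appeal to the quantile theory---at the cost of a slightly longer argument; the paper's citation is shorter but leans on an external result. For the bound on $\phi_p$, your convexity-plus-endpoints argument and the paper's computation are essentially equivalent: the paper bounds $C_p(h,u) \le M$, uses $[M-t]_+ = M-t$ for $t \in \TT$, and observes that the resulting affine function $(1-\frac{1}{\alpha})t + \frac{M}{\alpha}$ has negative slope and is therefore maximized at $t=m$; you reduce to the endpoints $t \in \{m,M\}$ by convexity of $t \mapsto \phi_p(h,t,u)$ and bound each endpoint value, which captures exactly the same trade-off. One caveat, which you inherit from the lemma statement rather than introduce yourself: attainment of the min and max defining $m$ and $M$ over $\HH \times \UU \times \PP$ requires joint regularity of $(h,u) \mapsto C_p(h,u)$, whereas the paper's standing assumptions only give continuity in $h$ for fixed $u$ and measurability in $u$; your appeal to ``continuity in $h$'' plus compactness is therefore not quite sufficient as stated, but the paper's own proof glosses over this identical point.
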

\begin{proof} 
	From~\cite[Chapter 6]{shapiro2014lectures}, optimizers of these problems exist and they lie in the closed interval defined by the left- and the right-side $(1-\alpha)$-quantile (cf. Section~\ref{sec:prelims}) of the respective random variables. Since this interval belongs to the set of values the functions take, we conclude that the optimizers belong to $\TT$. To conclude~\eqref{eq:phi-bound}, note that 
	\begin{align*}
	\phi_p (h,t,u) & = t + \frac{1}{\alpha} [C_p(h,u) -t]_+ \le t + \frac{1}{\alpha} [M-t]_+
	\\
	& = t + \frac{1}{\alpha} (M-t) = (1-\frac{1}{\alpha}) t + \frac{1}{\alpha} M
	\\
	& \le (1-\frac{1}{\alpha}) m + \frac{1}{\alpha} M.
	\end{align*}
	Here, the first inequality follows from the bound on $C_p$, the first equality is because $t \in [m,M]$, and the second inequality is due to the fact that $\alpha < 1$. Similarly, for the lower bound,
	\begin{align*}
	\phi_p(h,t,u) & \ge t + \frac{1}{\alpha}[m-t]_+ = t \ge m.
	\end{align*}
	This completes the proof.	
\end{proof}
We make a note here that optimizers of problems defining the $\CVaR$ in~\eqref{eq:cvar-true-cp} and~\eqref{eq:cvar-N-cp} exist and are bounded for more general cases, even when the support of the uncertainty is unbounded, see e.g.,~\cite[Chapter 6]{shapiro2014lectures}. Nevertheless, the above result provides an explicit bound which is used later in deriving precise exponential convergence guarantees. 

As a consequence of Lemma~\ref{le:cvar-opt-compact}, one can show uniform convergence of $\psihat^N_p$ to $\psi_p$. Our next step is to analyze the sensitivity of $F$ as one perturbs the underlying map $\psi$. In combination with the uniform convergence of $\psihat^N_p$, this result leads to the uniform convergence of $\Fhat^N$ to $F$. 

\begin{lemma}\longthmtitle{Sensitivity of $F$ with respect to $\psi$}\label{le:sensitivity-F}
	For any $\eps > 0$, if $\sup_{p \in \PP, (h,t) \in \HH \times \TT} \abs{\psihat^N_p (h,t) - \psi_p(h,t)} \le \eps$, where $\TT$ is defined in Lemma~\ref{le:cvar-opt-compact}, then 
	\begin{align*}
	\sup_{h \in \HH} \norm{\Fhat^N(h) - F(h)} \le \sqrt{\abs{\PP}} \eps. 
	\end{align*}
\end{lemma}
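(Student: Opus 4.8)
The plan is to reduce the claim to an elementary, coordinatewise stability estimate for the minimum of a function under uniform perturbation, and then aggregate the per-path bounds through the Euclidean norm. The crucial enabling observation is that, by Lemma~\ref{le:cvar-opt-compact}, for every $h \in \HH$ and $p \in \PP$ the infima defining $F_p(h)$ and $\Fhat^N_p(h)$ are attained at points of the compact interval $\TT$. Consequently I may write $F_p(h) = \min_{t \in \TT} \psi_p(h,t)$ and $\Fhat^N_p(h) = \min_{t \in \TT} \psihat^N_p(h,t)$, which is exactly the region over which the hypothesis supplies uniform closeness of $\psihat^N_p$ to $\psi_p$.

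First I would record the standard fact that, for two real-valued functions $f,g$ on a common domain on which both minima are attained, $\abs{\min f - \min g} \le \sup \abs{f - g}$. This follows by choosing a minimizer $t_g$ of $g$ and writing $\min f \le f(t_g) \le g(t_g) + \sup \abs{f - g} = \min g + \sup \abs{f - g}$, then interchanging the roles of $f$ and $g$. Applying this with $f = \psi_p(h, \cdot)$ and $g = \psihat^N_p(h, \cdot)$ on $\TT$, and invoking the hypothesis $\sup_{p \in \PP, (h,t) \in \HH \times \TT} \abs{\psihat^N_p(h,t) - \psi_p(h,t)} \le \eps$, yields $\abs{F_p(h) - \Fhat^N_p(h)} \le \eps$ for every $p \in \PP$ and every $h \in \HH$, with a bound that is independent of $h$.

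Finally I would assemble the scalar bounds into the vector estimate. Since the vector $\Fhat^N(h) - F(h) \in \real^{\abs{\PP}}$ has each component bounded in absolute value by $\eps$, its $2$-norm satisfies $\norm{\Fhat^N(h) - F(h)} = \bigl( \sum_{p \in \PP} \abs{\Fhat^N_p(h) - F_p(h)}^2 \bigr)^{1/2} \le \sqrt{\abs{\PP}}\, \eps$, uniformly in $h$; taking the supremum over $h \in \HH$ then gives the stated conclusion. I do not anticipate a substantive obstacle, since the argument is a routine min-stability estimate followed by the elementary inequality relating the $2$-norm to the componentwise sup. The one point genuinely requiring care is the appeal to Lemma~\ref{le:cvar-opt-compact} to replace each infimum over $\real$ by a minimum over $\TT$: without this localization, the hypothesized bound, which holds only on $\HH \times \TT$, would not by itself control the difference of the two unconstrained infima, so it is essential that both the true and the empirical optimizers are guaranteed to lie in $\TT$.
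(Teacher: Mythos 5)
Your proposal is correct and follows essentially the same route as the paper's proof: both establish $\abs{\Fhat^N_p(h) - F_p(h)} \le \eps$ via the standard min-stability estimate evaluated at the minimizers, which Lemma~\ref{le:cvar-opt-compact} guarantees to lie in $\TT$ (so the hypothesis applies), and then pass to the $2$-norm via $\norm{\Fhat^N(h) - F(h)} \le \sqrt{\abs{\PP}} \sup_{p \in \PP} \abs{\Fhat^N_p(h) - F_p(h)}$. Your explicit remark that the localization of the optimizers to $\TT$ is what makes the hypothesis usable is exactly the point the paper's proof relies on implicitly.
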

\begin{proof}
	The first step is to show the sensitivity of the map $\CVaR_\alpha [C_p(\cdot,u)]$ with respect to $\psi_p$. To this end, fix $p \in \PP$ and $h \in \HH$, and let 
	\begin{align*}
	\teehat^N_p(h) \in \underset{t \in \real}{\argmin} \, \psihat^N_p(h,t)
	\, \text{ and } \,
	\tee_p(h) \in \underset{t \in \real}{\argmin} \, \psi_p(h,t).
	\end{align*}
	These optimizers exist due to Lemma~\ref{le:cvar-opt-compact}. We now have 
	\begin{align*}
	\psi_p \Bigl(h, \tee_p(h)\Bigr) - \eps \le \psi_p \Bigl(h, \teehat^N_p(h) \Bigr) -\eps \le \psihat^N_p \Bigl(h, \teehat^N_p(h)\Bigr). 
	\end{align*}
	The first inequality is due to optimality and the second inequality holds by assumption. Similarly, one can show that 
	\begin{align*}
	\psihat^N_p\Bigl(h, \teehat^N_p(h)\Bigr) -\eps \le \psi_p\Bigl(h, \tee_p(h)\Bigr).
	\end{align*}
	The above two sets of inequalities along with the fact that $\CVaRhat^N_\alpha[C_p(h,u)] = \psihat^N_p\Bigl(h, \teehat^N_p(h)\Bigr)$ and $\CVaR_\alpha[C_p(h,u)] = \psi_p\Bigl(h, \tee_p(h)\Bigr)$ lead to the conclusion
	\begin{align}\label{eq:cvar-sup-bound}
	\sup_{h \in \HH} \Big| \CVaRhat^N_\alpha[C_p(h,u)] - \CVaR_\alpha [C_p(h,u)] \Big| \le \eps.
	\end{align} 
	Finally, the conclusion follows from the inequality $\norm{\Fhat^N(h) - F(h)} \le \sqrt{\abs{\PP}} \sup_{p \in \PP} \abs{\Fhat^N_p(h) - F_p(h)}$. 
\end{proof}
The final preliminary result states proximity of $\SSCWEhat^N$ to $\SSCWE$ given that the difference between $\Fhat^N$ and $F$ is bounded. The proof is a consequence of~\cite[Lemma 2.1]{xu2010saastochasticvi} that studies sensitivity of generalized equations and their solution sets.
\begin{lemma}\longthmtitle{Sensitivity of $\SSCWE$ with respect to $F$}\label{le:cont-sol-set}
	For any $\eps > 0$, there exists $\delta(\eps) > 0$ such that $\Db(\SSCWEhat^N, \SSCWE) \le \eps$ whenever 
	$\sup_{h \in \HH} \norm{\Fhat^N(h) - F(h)} \le \delta(\epsilon)$.
\end{lemma}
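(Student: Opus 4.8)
The plan is to recognize that $\SSCWE = \SOL(F,\HH)$ and $\SSCWEhat^N = \SOL(\Fhat^N,\HH)$, so the statement is precisely a quantitative form of the outer semicontinuity of the solution map $G \mapsto \SOL(G,\HH)$ at the nominal continuous map $F$, where the perturbed data $G$ is measured in the sup-norm over $\HH$. I would either invoke the abstract sensitivity result for generalized equations in \cite[Lemma 2.1]{xu2010saastochasticvi}, after verifying its hypotheses—namely that $\HH$ is nonempty, compact, and convex, that $F$ is continuous, and that $\SOL(F,\HH)=\SSCWE$ is nonempty and compact, all recorded in Section~\ref{sec:network-uncertain}—or give a self-contained proof by contradiction exploiting compactness of $\HH$. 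I sketch the latter, since it is elementary and makes the role of each assumption transparent.

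Suppose, toward a contradiction, that the claim fails for some $\eps_0 > 0$. Then for every $k \in \naturalnumbers$, taking $\delta = 1/k$, there is a continuous map $G^k$ with $\sup_{h\in\HH}\norm{G^k(h)-F(h)} \le 1/k$ for which $\Db(\SOL(G^k,\HH),\SSCWE) > \eps_0$; by definition of the deviation, there exists $h^k \in \SOL(G^k,\HH)$ with $\dist(h^k,\SSCWE) > \eps_0$. (In the intended application $G^k = \Fhat^{N_k}$ along a suitable sequence $N_k$, but the argument uses only the sup-norm bound.) Since $h^k \in \HH$ and $\HH$ is compact, I can pass to a subsequence, not relabeled, with $h^k \to h^* \in \HH$.

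The core step is to pass to the limit in the defining variational inequality. For each $k$ and each $h \in \HH$, membership $h^k \in \SOL(G^k,\HH)$ gives $(h-h^k)^\top G^k(h^k) \ge 0$. I would then split $G^k(h^k) = F(h^k) + \bigl(G^k(h^k) - F(h^k)\bigr)$ and bound the perturbation by $\norm{G^k(h^k) - F(h^k)} \le \sup_{h\in\HH}\norm{G^k(h)-F(h)} \le 1/k \to 0$, while continuity of $F$ yields $F(h^k) \to F(h^*)$; hence $G^k(h^k) \to F(h^*)$. Letting $k \to \infty$ in the inequality, for every fixed $h \in \HH$ I obtain $(h - h^*)^\top F(h^*) \ge 0$, so $h^* \in \SOL(F,\HH) = \SSCWE$. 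Since $\dist(\cdot,\SSCWE)$ is continuous, $\dist(h^*,\SSCWE) = \lim_k \dist(h^k,\SSCWE) \ge \eps_0 > 0$, contradicting $h^* \in \SSCWE$. This establishes the existence of the desired $\delta(\eps)$.

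I expect the only nontrivial point to be the simultaneous limit $G^k(h^k) \to F(h^*)$ taken along the moving argument $h^k$: it genuinely requires both the uniform (sup-norm) control on $G^k - F$ and the continuity of $F$, not mere pointwise convergence of $G^k$ to $F$. Everything else is standard compactness bookkeeping. If one prefers instead to cite \cite[Lemma 2.1]{xu2010saastochasticvi} directly, the only work is to phrase $\VI(F,\HH)$ as the generalized equation $0 \in F(h) + N_\HH(h)$, with $N_\HH$ the normal cone of the convex set $\HH$, and to check that the cited lemma's regularity hypotheses reduce, in our compact–convex feasible set with continuous nominal map, exactly to the facts already recorded above.
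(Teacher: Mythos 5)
Your proposal is correct, but it takes a genuinely different route from the paper: the paper's entire proof is a one-line appeal to the sensitivity result for generalized equations in \cite[Lemma 2.1]{xu2010saastochasticvi} (your first suggested route), whereas the argument you actually develop is the elementary compactness-and-contradiction proof. Your self-contained argument is sound: the negation of the claim produces maps $G^k$ with $\sup_{h \in \HH}\norm{G^k(h)-F(h)} \le 1/k$ and points $h^k \in \SOL(G^k,\HH)$ with $\dist(h^k,\SSCWE) > \eps_0$; compactness of $\HH$ gives a convergent subsequence $h^k \to h^*$; the decomposition $G^k(h^k) = F(h^k) + \bigl(G^k(h^k)-F(h^k)\bigr)$ together with the uniform bound and continuity of $F$ lets you pass to the limit in $(h-h^k)^\top G^k(h^k) \ge 0$ for each fixed $h$, forcing $h^* \in \SOL(F,\HH) = \SSCWE$ while $\dist(h^*,\SSCWE) \ge \eps_0$, a contradiction. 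You correctly identify the one delicate point (the joint limit $G^k(h^k) \to F(h^*)$ along the moving argument, which needs uniform rather than pointwise control), and you correctly observe that continuity of $G^k$ itself is never used, only the sup-norm bound, so the argument applies verbatim to realizations of $\Fhat^N$. What each approach buys: the paper's citation is shorter and situates the lemma in the broader framework of generalized equations $0 \in F(h) + N_\HH(h)$, which is the form reused elsewhere in that literature; your proof is self-contained, makes transparent that only nonemptiness and compactness of $\HH$, continuity of $F$, and the definition of a VI solution are needed, and avoids having to verify the cited lemma's hypotheses. Either version would be acceptable as a proof of the lemma as stated.
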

Next is the main result of this section, establishing the asymptotic consistency of $\SSCWEhat^N$. The proof puts to use the preliminary lemmas on sensitivity presented above along with the uniform convergence of $\psihat^N_p$ to $\psi_p$.

\begin{theorem}\longthmtitle{Asymptotic consistency of $\SSCWEhat^N$}\label{th:asymptotic}
	We have $\Db(\SSCWEhat^N,\SSCWE) \to 0$ almost surely.
\end{theorem}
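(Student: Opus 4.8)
The plan is to reduce the set-valued convergence to a uniform convergence of the approximate VI maps and then to establish that uniform convergence via a uniform law of large numbers. The backbone is the chain of the three preliminary lemmas: Lemma~\ref{le:cvar-opt-compact} confines the optimizers to the compact box $\TT$ and bounds the integrands, Lemma~\ref{le:sensitivity-F} converts a uniform bound on $\psihat^N_p - \psi_p$ into a uniform bound on $\Fhat^N - F$, and Lemma~\ref{le:cont-sol-set} converts a uniform bound on $\Fhat^N - F$ into a bound on the deviation $\Db(\SSCWEhat^N, \SSCWE)$. Thus the only genuinely new ingredient needed is
\[
\sup_{p \in \PP,\, (h,t) \in \HH \times \TT} \abs{\psihat^N_p(h,t) - \psi_p(h,t)} \to 0 \quad \text{a.s.}
\]

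First I would set up a uniform law of large numbers on the compact parameter set $\HH \times \TT$. Fixing $p \in \PP$ and regarding $g_p(h,t,u) := [C_p(h,u) - t]_+$ as the integrand, I note that $\psihat^N_p - \psi_p = \tfrac{1}{\alpha}\bigl(\tfrac1N\sum_i g_p(h,t,\uhat^i) - \Eb_\Pb[g_p(h,t,u)]\bigr)$. I would then verify the standard Carathéodory-plus-domination hypotheses of the ULLN (e.g.,~\cite[Chapter 7]{shapiro2014lectures}): (i) $(h,t) \mapsto g_p(h,t,u)$ is continuous for every $u$, since $C_p(\cdot,u)$ is continuous and $[\,\cdot\,]_+$ and subtraction of $t$ are continuous; (ii) $u \mapsto g_p(h,t,u)$ is measurable for every $(h,t)$, by the measurability assumption on $C_p$; and (iii) $g_p$ is dominated by an integrable function — indeed $0 \le g_p(h,t,u) \le M - m$ uniformly on $\HH \times \TT \times \UU$ by Lemma~\ref{le:cvar-opt-compact}, and the constant $M - m$ is integrable since $\Pb$ is a probability measure. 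With $\HH \times \TT$ compact, the ULLN then yields $\sup_{(h,t)} \abs{\psihat^N_p - \psi_p} \to 0$ a.s.

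Next I would combine over paths and run the chain. Since $\PP$ is finite, intersecting the $\abs{\PP}$ probability-one events on which each $\sup_{(h,t)} \abs{\psihat^N_p - \psi_p} \to 0$ yields a probability-one event on which the displayed uniform-over-$p$ convergence holds. On this event, fix $\omega$ and $\eps > 0$: the supremum eventually falls below $\eps$, so Lemma~\ref{le:sensitivity-F} gives $\sup_h \norm{\Fhat^N(h) - F(h)} \le \sqrt{\abs{\PP}}\,\eps$ for $N$ large, whence $\sup_h \norm{\Fhat^N - F} \to 0$ pathwise. Feeding this into Lemma~\ref{le:cont-sol-set}, for any $\eps > 0$ choose $\delta(\eps)$; once $\sup_h \norm{\Fhat^N - F} \le \delta(\eps)$ we obtain $\Db(\SSCWEhat^N, \SSCWE) \le \eps$, and as $\eps$ is arbitrary, $\Db(\SSCWEhat^N, \SSCWE) \to 0$ on the probability-one event, i.e. a.s.

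The main obstacle I anticipate is the clean invocation of the uniform law of large numbers: verifying that the Carathéodory and domination conditions hold in our setting, and handling the measurability of the supremum over the uncountable set $\HH \times \TT$. The latter is resolved by continuity of $g_p$ in $(h,t)$ together with separability of the compact domain, which let the supremum be evaluated over a countable dense subset; the domination is precisely what Lemma~\ref{le:cvar-opt-compact} was crafted to supply, so the remaining work is mostly bookkeeping rather than a new estimate.
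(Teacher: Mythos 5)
Your proposal is correct and follows essentially the same route as the paper's proof: a uniform law of large numbers (cf.~\cite[Chapter 7]{shapiro2014lectures}) gives a.s.\ uniform convergence of $\psihat^N_p$ to $\psi_p$ on the compact set $\HH \times \TT$, after which Lemma~\ref{le:sensitivity-F} and Lemma~\ref{le:cont-sol-set} are chained exactly as you describe. The only cosmetic difference is that you apply the ULLN to the integrand $[C_p(h,u)-t]_+$ (exploiting that the deterministic term $t$ cancels in $\psihat^N_p - \psi_p$), whereas the paper applies it directly to $\phi_p$ in~\eqref{eq:phi-def} with the domination bound~\eqref{eq:phi-bound}; the two verifications are equivalent.
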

\ifinclude{
\begin{proof}
	Consider first the a.s. uniform convergence $\psihat^N_p \to \psi_p$ over the compact set $\HH \times \TT$.  
	Note that $\psi_p(h,t) = \Eb_\Pb [\phi_p(h,t,u)]$ where $\phi_p$ is given in~\eqref{eq:phi-def} and so, $\psihat^N_p$ is the sample average of $\psi_p$. For any fixed $u \in \UU$, the map $\phi_p(\cdot, \cdot, u)$ is continuous and for any $(h,t) \in \HH \times \TT$, due to Lemma~\ref{le:cvar-opt-compact},  the map $\phi_p(h,t,\cdot)$ is dominated by the integrable function (a constant in this case) $m + \frac{M-m}{\alpha}$. Hence, by the uniform law of large numbers result~\cite[Theorem 7.48]{shapiro2014lectures}, we conclude that $\psihat^N_p \to \psi_p$ uniformly a.s. on $\HH \times \TT$. Using this fact in the sensitivity result of  Lemma~\ref{le:sensitivity-F} implies that $\Fhat^N \to F$ uniformly a.s. on the set $\HH$. Finally, we arrive at the conclusion using Lemma~\ref{le:cont-sol-set}.  
\end{proof}
}

\subsection{Exponential convergence of $\SSCWEhat^N$}\label{subsec:exp-conv}

In this section, our strategy will be to use the concentration inequality for the empirical $\CVaR$ given in~\cite{YW-FG:10-orl} and derive the uniform exponential convergence of $\Fhat^N$ to $F$. Later, we will use Lemma~\ref{le:cont-sol-set} to infer exponential convergence of $\SSCWEhat^N$. Note that the inequality given in~\cite{YW-FG:10-orl} requires compact support of the random variable and it is tight when it comes to the dependency on the risk parameter $\alpha$. For unbounded support, one can use deviation inequalities from~\cite{RKK-PLA-SPB-KJ:10-orl}.  

For a fixed $p \in \PP$ and $h \in \HH$, the deviation between the $\CVaR$ and its empirical counterpart can be bounded using the results in~\cite{YW-FG:10-orl} as
\begin{align}\label{eq:pw-CVaR-bound}
\Pb^N \Bigl(\absb{\CVaRhat_\alpha^N  & [C_p(h,u)]  - \CVaR_\alpha[C_p(h,u)]} \ge \eps \Bigr) \notag
\\
& \quad \le 6 \exp \Bigl(-\frac{\alpha \eps^2}{11 (M-m)^2} N \Bigr).
\end{align} 
In the above bound, the denominator in the exponent uses the fact that for any path and flow vector, the cost seen as a random variable is supported on the compact set $[m,M]$. Similar to the narrative of the previous section, while the above inequality holds pointwise, what we need is uniform exponential bound for proximity of $F$ to $\Fhat^N$. In the sequel, we will derive such a bound under the following condition. 
\begin{assumption}\longthmtitle{Lipschitz continuity of $C_p$}\label{as:exp-conv}
	There exists a constant $L > 0$ such that 
	\begin{align}\label{eq:lips-psi}
	\abs{C_p(h,u) - C_p(h',u)} \le L \norm{h - h'},
	\end{align}
	for all $h, h' \in \HH$, $u \in \UU$, and $p \in \PP$.
	\oprocend
\end{assumption}
Under the above Lipschitz condition on the cost functions, one can show the following. 
\begin{lemma}\longthmtitle{Lipschitz continuity of (empirical) $\CVaR$}\label{le:CVaR-Lip}
	Under Assumption~\ref{as:exp-conv}, for any path $p \in \PP$, the functions $h \mapsto \CVaRhat_\alpha^N[C_p(h,u)]$ and $h \mapsto \CVaR_\alpha[C_p(h,u)]$ are Lipschitz over the set $\HH$ with constant $\frac{L}{\alpha}$. 
\end{lemma}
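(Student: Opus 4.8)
The plan is to exploit the variational representations $\CVaR_\alpha[C_p(h,u)] = \inf_{t \in \real} \psi_p(h,t)$ and $\CVaRhat^N_\alpha[C_p(h,u)] = \inf_{t \in \real} \psihat^N_p(h,t)$, reducing the claim to a Lipschitz estimate that holds \emph{uniformly} in $t$, combined with the elementary fact that a pointwise infimum of a family of functions sharing a common Lipschitz constant is itself Lipschitz with that same constant.

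First I would fix $t \in \real$ and $p \in \PP$ and show that $h \mapsto \psi_p(h,t)$ is Lipschitz with constant $L/\alpha$. The key observation is that the positive-part map $x \mapsto [x]_+$ is $1$-Lipschitz on $\real$, so for any $h, h' \in \HH$ and $u \in \UU$,
\begin{align*}
\absb{[C_p(h,u)-t]_+ - [C_p(h',u)-t]_+} \le \abs{C_p(h,u) - C_p(h',u)} \le L \norm{h-h'},
\end{align*}
where the last inequality is Assumption~\ref{as:exp-conv}. Taking expectations under $\Pb$ and recalling the definition of $\psi_p$ in~\eqref{eq:psi-maps} gives $\abs{\psi_p(h,t) - \psi_p(h',t)} \le \frac{L}{\alpha} \norm{h-h'}$, with the constant independent of $t$. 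The identical computation applied termwise to the finite sum defining $\psihat^N_p$ yields $\abs{\psihat^N_p(h,t) - \psihat^N_p(h',t)} \le \frac{L}{\alpha} \norm{h-h'}$, again uniformly in $t$; here the bound invokes the Lipschitz property of $C_p(\cdot,u)$ only at the sample points $u = \uhat^i \in \UU$, which is covered by the assumption.

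Second I would pass the uniform-in-$t$ Lipschitz bound through the infimum. Given $h, h' \in \HH$ and any $\eta > 0$, choose $t$ nearly minimizing $\psi_p(h', \cdot)$, so that $\psi_p(h', t) \le \CVaR_\alpha[C_p(h',u)] + \eta$; then
\begin{align*}
\CVaR_\alpha[C_p(h,u)] &\le \psi_p(h,t) \le \psi_p(h',t) + \tfrac{L}{\alpha}\norm{h-h'} \\
&\le \CVaR_\alpha[C_p(h',u)] + \tfrac{L}{\alpha}\norm{h-h'} + \eta.
\end{align*}
Letting $\eta \to 0$ and swapping the roles of $h$ and $h'$ gives the two-sided bound $\abs{\CVaR_\alpha[C_p(h,u)] - \CVaR_\alpha[C_p(h',u)]} \le \frac{L}{\alpha}\norm{h-h'}$. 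Repeating the argument with $\psihat^N_p$ in place of $\psi_p$ establishes the claim for the empirical $\CVaR$. Alternatively, by Lemma~\ref{le:cvar-opt-compact} the infima are attained on the compact set $\TT$, so one may restrict to $t \in \TT$ and use attainment directly, avoiding the $\eta$-argument.

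I do not expect a serious obstacle here: both steps are routine once the $1$-Lipschitz property of $[\,\cdot\,]_+$ is isolated. The only point requiring mild care is that the per-$t$ Lipschitz modulus $L/\alpha$ must be independent of $t$ so that it survives the infimum; this is automatic, since $t$ enters $\psi_p$ and $\psihat^N_p$ only through an additive $t$ and through the argument of the $1$-Lipschitz positive part, neither of which inflates the Lipschitz modulus in $h$.
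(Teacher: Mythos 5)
Your proof is correct and follows essentially the same route as the paper's: a Lipschitz bound on $\psihat^N_p(\cdot,t)$ and $\psi_p(\cdot,t)$ with constant $\frac{L}{\alpha}$ uniform in $t$ (via the $1$-Lipschitz property of $[\,\cdot\,]_+$ and Assumption~\ref{as:exp-conv}), then passed through the infimum over $t$ and symmetrized in $h, h'$. The only cosmetic differences are that the paper works with attained minimizers (your stated alternative) rather than $\eta$-near-minimizers, and it spells out only the empirical case, declaring the true $\CVaR$ case analogous, whereas you treat both.
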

\begin{proof}
	We will show the property for the function $h \mapsto \CVaRhat_\alpha^N[C_p(h,u)]$. The reasoning for $h \mapsto \CVaR_\alpha[C_p(h,u)]$ follows analogously. Consider any $h, h' \in \HH$. Recall from~\eqref{eq:psi-maps} that
	\begin{align}
	\absb{\CVaRhat^N_\alpha[C_p(h,u)] & - \CVaRhat^N_\alpha[C_p(h',u)]} \notag
	\\
	& = \absb{\inf_{t \in \real} \psihat^N_p(h,t) - \inf_{t \in \real} \psihat^N_p(h',t)}. \label{eq:cvar-psi}
	\end{align}
	Assumption~\ref{as:exp-conv} yields Lipschitz property for the map $\psihat^N_p$. To establish this, fix any $p \in \PP$ and $t \in \real$ and notice that
	\begin{align*}
	\absb{\psihat^N_p &(h,t)  - \psihat^N_p(h',t)}  = \absb{t + \frac{1}{N \alpha}  \sum_{i=1}^N [C_p(h,\uhat^i) - t]_+ 
		\\
		& \qquad \qquad - \Bigl( t + \frac{1}{N \alpha}  \sum_{i=1}^N [C_p(h',\uhat^i) - t]_+ \Bigr)}
	\\
	& \le \frac{1}{N \alpha} \sum_{i=1}^N \absb{[C_p(h,\uhat^i) - t]_+ - [C_p(h',\uhat^i) - t]_+} 
	\\
	& \le \frac{1}{N \alpha} \sum_{i=1}^N \absb{C_p(h,\uhat^i) - C_p(h',\uhat^i)} \le \frac{L}{\alpha} \norm{h - h'}.
	\end{align*}
	Above, the first relation is a consequence of the triangle inequality, the second inequality follows from the fact that the map $[ \, \cdot \, ]_+$ is Lipschitz with constant as unity, and the last inequality uses Lipschitz property of the costs. Now let $\tb, \tb' \in \real$ be such that $\psihat^N_p(h,\tb) = \inf_{t \in \real} \psihat^N_p(h,t)$ and $\psihat^N_p(h',\tb') = \inf_{t \in \real} \psihat^N_p(h',t)$. Existence of such an optimizer follows from the discussion in~\cite[Chapter 6]{shapiro2014lectures}. Now note the following sequence of inequalities that can be inferred from the optimality condition and the Lipschitz property of $\psihat^N_p$ shown above, 
	\begin{align}
	\inf_{t \in \real} \psihat^N_p(h,t) & = \psihat^N_p(h,\tb) \le \psihat^N_p(h,\tb') \notag
	\\
	& \le \psihat^N_p(h', \tb') + \frac{L}{\alpha} \norm{h - h'} \notag
	\\
	& = \inf_{t \in \real} \psihat^N_p(h',t) + \frac{L}{\alpha} \norm{h - h'}. \label{eq:inf-psi-ineq-1}
	\end{align}
	One can exchange $h$ with $h'$ in the above reasoning and obtain  
	\begin{align}
	\inf_{t \in \real} \psihat^N_p(h',t) \le \inf_{t \in \real} \psihat^N_p(h,t) + \frac{L}{\alpha} \norm{h - h'}. \label{eq:inf-psi-ineq-2}
	\end{align}
	Inequalities~\eqref{eq:inf-psi-ineq-1} and~\eqref{eq:inf-psi-ineq-2} imply that 
	\begin{align*}
		\absb{\inf_{t \in \real} \psihat^N_p(h,t) - \inf_{t \in \real} \psihat^N_p(h',t)} \le \frac{L}{\alpha} \norm{h - h'}.
	\end{align*}
	The proof concludes by using this fact in~\eqref{eq:cvar-psi}. 
\end{proof}

Next we state exponential convergence of $\Fhat^N$. The proof is largely inspired from the steps given in~\cite[Theorem 5.1]{shapiro2008saa-empec} and is a standard argument in these set of results. We note that the obtained bound is very crude and in practice, the achieved performance is much better.

\begin{proposition}\longthmtitle{Uniform exponential convergence of $\Fhat^N$ to $F$}\label{pr:exp-conv-F}
	Under Assumption~\ref{as:exp-conv}, for any $\eps > 0$, the following inequality holds for all $N \in \naturalnumbers$,
	\begin{align*}
	\Pb^N \Bigl( \sup_{h \in \HH} \norm{\Fhat^N(h) - F(h)} > \eps \Bigr) \le \gamma(\eps) e^{-\beta(\eps)N},
	\end{align*}
	where 
	\begin{subequations}\label{eq:g-b}
		\begin{align}
		\gamma(\eps) & :=   \frac{ 3 \abs{\PP} \ceil{\abs{\PP}/2}! }{\pi^{\abs{\PP}/2} } \Bigl( \frac{12 L \diam(\HH)}{\eps \alpha} \Bigr)^{\abs{\PP}} \label{eq:gamma} 
		\\
		\beta(\eps) & := \frac{\alpha \eps^2}{44 \abs{\PP} (M-m)^2}  \label{eq:beta}
		\end{align}
	\end{subequations}	
	Here, $\diam(\HH) = \sup_{h, h' \in \HH} \norm{h-h'}$ is the diameter of $\HH$.
\end{proposition}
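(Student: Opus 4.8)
The plan is to upgrade the pointwise concentration inequality \eqref{eq:pw-CVaR-bound} to a uniform bound by a finite-net (covering) argument, in the spirit of \cite[Theorem 5.1]{shapiro2008saa-empec}. The two ingredients that make this work are already available: the per-path, per-flow deviation bound \eqref{eq:pw-CVaR-bound}, and the Lipschitz continuity of both $h \mapsto \CVaRhat^N_\alpha[C_p(h,u)]$ and $h \mapsto \CVaR_\alpha[C_p(h,u)]$ with constant $L/\alpha$ from Lemma~\ref{le:CVaR-Lip}. The Lipschitz property lets me transfer a deviation bound from a finite set of grid points to all of $\HH$, while the concentration bound controls the deviation at each grid point.

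Concretely, I would first fix a radius $r > 0$ and take a finite $r$-net $\{h^1, \dots, h^K\} \subset \HH$, i.e. a set such that every $h \in \HH$ lies within distance $r$ of some $h^j$. For an arbitrary $h$ with nearest net point $h^j$, the triangle inequality together with the componentwise Lipschitz bound of Lemma~\ref{le:CVaR-Lip} gives
\begin{align*}
\norm{\Fhat^N(h) - F(h)} &\le \norm{\Fhat^N(h) - \Fhat^N(h^j)} + \norm{\Fhat^N(h^j) - F(h^j)} \\ &\quad + \norm{F(h^j) - F(h)} \le 2\sqrt{\abs{\PP}}\tfrac{L}{\alpha}r + \norm{\Fhat^N(h^j) - F(h^j)},
\end{align*}
where the factor $\sqrt{\abs{\PP}}$ converts the per-path constant $L/\alpha$ into a bound on the Euclidean norm of the vector map. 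Choosing $r := \frac{\alpha \eps}{4\sqrt{\abs{\PP}} L}$ makes the discretization term equal to $\eps/2$, so that the event $\sup_{h \in \HH}\norm{\Fhat^N(h) - F(h)} > \eps$ forces $\norm{\Fhat^N(h^j) - F(h^j)} > \eps/2$ for at least one net point $h^j$.

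The remaining step is a double union bound. Since $\norm{\Fhat^N(h^j) - F(h^j)} \le \sqrt{\abs{\PP}}\max_{p}\abs{\Fhat^N_p(h^j) - F_p(h^j)}$, a deviation larger than $\eps/2$ at $h^j$ entails $\abs{\CVaRhat^N_\alpha[C_p(h^j,u)] - \CVaR_\alpha[C_p(h^j,u)]} > \frac{\eps}{2\sqrt{\abs{\PP}}}$ for some path $p$. Summing \eqref{eq:pw-CVaR-bound} over the $K$ net points and the $\abs{\PP}$ paths then yields
\begin{align*}
\Pb^N\Bigl(\sup_{h \in \HH}\norm{\Fhat^N(h) - F(h)} > \eps\Bigr) \le 6 K \abs{\PP}\exp\Bigl(-\frac{\alpha (\eps/(2\sqrt{\abs{\PP}}))^2}{11(M-m)^2}N\Bigr),
\end{align*}
and the exponent collapses to exactly $\beta(\eps) = \frac{\alpha\eps^2}{44\abs{\PP}(M-m)^2}$, with $\gamma(\eps) = 6\abs{\PP}K$. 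This is where the ``half to the net, half to the concentration'' split and the per-path rescaling by $\sqrt{\abs{\PP}}$ conspire to give the stated rate.

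It remains to estimate the covering number $K$, which is where the explicit prefactor in $\gamma(\eps)$ is produced and where I expect the main effort to lie. Viewing $\HH$ as a subset of $\Rb^{\abs{\PP}}$ of diameter $\diam(\HH)$, a volumetric estimate — packing disjoint balls of radius $r/2$ inside a region enclosing $\HH$ and dividing by the ball volume $\frac{\pi^{\abs{\PP}/2}}{\Gamma(\abs{\PP}/2+1)}(r/2)^{\abs{\PP}}$ — produces a bound of the form $K \le \frac{\Gamma(\abs{\PP}/2+1)}{\pi^{\abs{\PP}/2}}\bigl(\tfrac{c\,\diam(\HH)}{r}\bigr)^{\abs{\PP}}$. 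The reciprocal unit-ball volume $\Gamma(\abs{\PP}/2+1)/\pi^{\abs{\PP}/2}$, bounded above by $\lceil\abs{\PP}/2\rceil!/\pi^{\abs{\PP}/2}$, is precisely the source of the factorial and $\pi$ factors in \eqref{eq:gamma}; substituting $r = \frac{\alpha\eps}{4\sqrt{\abs{\PP}}L}$ and absorbing the numerical and $\sqrt{\abs{\PP}}$ constants into the $\frac{12 L \diam(\HH)}{\eps\alpha}$ term recovers $\gamma(\eps)$. The main obstacle is therefore arithmetic rather than conceptual: tracking the factors of $2$, the per-path $\sqrt{\abs{\PP}}$ rescaling, and the ball-volume constant through the choice of net radius so that both $\beta(\eps)$ and $\gamma(\eps)$ emerge in the stated closed form.
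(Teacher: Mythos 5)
Your proposal follows the paper's proof essentially step for step: the pointwise CVaR concentration bound \eqref{eq:pw-CVaR-bound}, the Lipschitz property of Lemma~\ref{le:CVaR-Lip} to transfer deviations from a finite net to all of $\HH$, a union bound over net points and paths, and a volumetric covering estimate with the unit-ball volume producing the $\ceil{\abs{\PP}/2}!/\pi^{\abs{\PP}/2}$ prefactor (as in Remark~\ref{re:cover}). The only structural difference is bookkeeping: you vectorize immediately (net radius $r = \frac{\alpha\eps}{4\sqrt{\abs{\PP}}L}$, vector Lipschitz constant $\sqrt{\abs{\PP}}L/\alpha$), whereas the paper runs the entire argument on the scalar quantities $\abs{\CVaRhat^N_\alpha[C_p(\cdot,u)]-\CVaR_\alpha[C_p(\cdot,u)]}$ with net radius $\frac{\alpha\eps}{4L}$ and converts to the Euclidean norm only in the last step via the substitution $\eps \mapsto \eps/\sqrt{\abs{\PP}}$. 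Your exponent computation is exact and yields $\beta(\eps)$ as stated.

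The one claim that does not survive scrutiny is that the $\sqrt{\abs{\PP}}$ factor can be ``absorbed'' to recover $\gamma(\eps)$ in \eqref{eq:gamma}. Substituting your radius into the covering estimate gives $K \le \frac{\ceil{\abs{\PP}/2}!}{2\pi^{\abs{\PP}/2}}\bigl(\frac{12\sqrt{\abs{\PP}}\,L\diam(\HH)}{\eps\alpha}\bigr)^{\abs{\PP}}$, which exceeds the covering number implicit in \eqref{eq:gamma} by $\abs{\PP}^{\abs{\PP}/2}$; since this sits inside a term raised to the power $\abs{\PP}$, it is not a constant that can be swept into the factor $12$. What you actually prove is the proposition with $\gamma(\eps)$ replaced by $\abs{\PP}^{\abs{\PP}/2}\gamma(\eps)$. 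You should know, however, that the paper's own proof has exactly the same leak, just better hidden: it establishes \eqref{eq:uniform-exp-psi-bd} with the covering number $K$ built for deviation level $\eps$, and then in the final step applies that inequality at level $\eps/\sqrt{\abs{\PP}}$ while leaving $K$ unchanged; applied consistently, the covering number there should be $K$ evaluated at $\eps/\sqrt{\abs{\PP}}$, which is precisely $\abs{\PP}^{\abs{\PP}/2}$ times larger. So your accounting is the more transparent of the two — it surfaces a factor that the stated prefactor \eqref{eq:gamma} silently drops — and both arguments deliver the exponential rate $\beta(\eps)$ exactly; the discrepancy affects only the $N$-independent prefactor, and hence none of the qualitative conclusions (Theorem~\ref{th:exp-conv}, Remark~\ref{re:sample}) beyond an additive $\frac{\abs{\PP}}{2}\log\abs{\PP}$ term in the sample-size estimate.
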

\begin{proof}
	The idea of moving from the pointwise exponential bound~\eqref{eq:pw-CVaR-bound} to a uniform bound is to impose the pointwise bound jointly on a finite number of points and use the Lipschitz property (Lemma~\ref{le:CVaR-Lip}) to bound the deviation of the rest of the set from this finite set. Making precise the mathematical details, note that one can cover the set $\HH$ with 
	\begin{align*}
		K := \frac{\ceil{\abs{\PP}/2}! }{2 \pi^{\abs{\PP}/2} } \Bigl( \frac{12 L \diam(\HH)}{\eps \alpha} \Bigr)^{\abs{\PP}}
	\end{align*}
	number of points, labeled $\CC := \{\htil^1, \dots, \htil^K\}$, such that for any $h \in \HH$, there exists a point $\htil^{i(h)} \in \CC$ with 
	\begin{align}\label{eq:cover-cond}
		\frac{L}{\alpha} \norm{h-\htil^{i(h)}} \le \frac{\eps}{4}.
	\end{align}
	The existence of such a set of points is discussed further in Remark~\ref{re:cover} below and it relates to the covering numbers of sets. Combining the Lipschitz bound given in Lemma~\ref{le:CVaR-Lip} and the inequality~\eqref{eq:cover-cond}, we get for all $p \in \PP$ and $h \in \HH$,
	\begin{subequations}\label{eq:lips-compact}
	\begin{align}
		\absb{ \CVaRhat_\alpha^N[C_p(h,u)] - \CVaRhat_\alpha^N[C_p(\htil^{i(h)},u)]} \le \frac{\eps}{4},
		\\
		\absb{\CVaR_\alpha[C_p(h,u)] - \CVaR_\alpha[C_p(\htil^{i(h)},u)]} \le \frac{\eps}{4}.
	\end{align}	
	\end{subequations}
	The above inequalities control the deviation of $\CVaRhat_\alpha^N[C_p(\cdot,u)]$ and $\CVaR_\alpha[C_p(\cdot,u)]$ over the set $\HH$ from the values these functions take on the set $\CC$. The next step entails bounding the deviation of the $\CVaR$ and the empirical $\CVaR$ on the set $\CC$. Employing~\eqref{eq:pw-CVaR-bound} and the union bound, we have 
	\begin{align}
		&\Pb^N \Bigl( \sup_{p \in \PP, h \in \CC} \absb{\CVaRhat^N_\alpha[C_p(h,u)] - \CVaR_\alpha[C_p(h,u)]} \! \ge \! \frac{\eps}{2} \Bigr) \notag
		\\
		&  \! \le \sum_{p \in \PP} \! \sum_{h \in \CC} \! \Pb^N \! \Bigl( \absb{\CVaRhat^N_\alpha  [C_p(h,u)] \notag
			\! - \! \CVaR_\alpha \! [C_p(h,u)]} \! \ge \! \frac{\eps}{2} \Bigr) \notag
		\\
		&  \le 6 \abs{\PP} K  \exp \Bigl(-\frac{\alpha \eps^2}{44 (M-m)^2} N \Bigr). \label{eq:sup-c-bound}
	\end{align}
	The next set of inequalities characterize the difference between the $\CVaR$ and the empirical $\CVaR$ over the set $\HH$ using the Lipschitz property~\eqref{eq:lips-compact}. Fix $p \in \PP$ and let $h \in \HH$. Note that using~\eqref{eq:lips-compact},
	\begin{align*}
		& \abs{\CVaRhat^N_\alpha[C_p(h,u)] - \CVaR_\alpha[C_p(h,u)]} 
		\\
		& \le \abs{\CVaRhat^N_\alpha[C_p(h,u)] - \CVaRhat^N_\alpha[C_p(\htil^{i(h)},u)]} 
		\\
		&  \qquad \quad + \abs{\CVaRhat^N_\alpha[C_p(\htil^{i(h)},u)] - \CVaR_\alpha[C_p(\htil^{i(h)},u)]} 
		\\
		&  \qquad \quad + \abs{\CVaR_\alpha[C_p(\htil^{i(h)},u)] - \CVaR_\alpha[C_p(h,u)]}
		\\
		& \le \frac{\eps}{2} + \abs{\CVaRhat^N_\alpha[C_p(\htil^{i(h)},u)] - \CVaR_\alpha[C_p(\htil^{i(h)},u)]}.
	\end{align*}
	Next, the deviation between the $\CVaR$ and its empirical counterpart is bounded using~\eqref{eq:sup-c-bound} and the above characterization as
	\begin{align}
		& \Pb^N \Bigl( \sup_{p \in \PP, h \in \HH} \absb{\CVaRhat^N_\alpha[C_p(h,u)] \! - \! \CVaR_\alpha[C_p(h,u)]} \! \ge \! \eps \Bigr) \notag
		\\
		& \le \! \Pb^N \!  \Bigl( \sup_{p \in \PP, h \in \CC} \! \absb{ \CVaRhat^N_\alpha \! [C_p(h,t)] \! - \! \CVaR_\alpha \! [C_p(h,u)]} \! \ge \! \frac{\eps}{2} \Bigr) \notag
		\\
		& \le 6 \abs{\PP} K  \exp \Bigl(-\frac{\alpha \eps^2}{44 (M-m)^2} N \Bigr) \label{eq:uniform-exp-psi-bd}
	\end{align}
	The final step is to connect the above inequality to the difference between $\Fhat^N$ and $F$. From the proof of Lemma~\ref{le:sensitivity-F}, one can deduce that if $\sup_{h \in \HH} \norm{\Fhat^N(h) - F(h)} > \eps$, then  
	\begin{align*}
		\sup_{p \in \PP, h \in \HH} \absb{\CVaRhat^N_\alpha[C_p(h,u)] - \CVaR_\alpha[C_p(h,u)]} \! > \! \frac{\eps}{\sqrt{\abs{\PP}}}.
	\end{align*}
	Therefore, using~\eqref{eq:uniform-exp-psi-bd} we obtain
	\begin{align*}
	&\Pb^N (\sup_{h \in \HH} \norm{\Fhat^N(h) - F(h)} > \eps)
	\\
	&\le \Pb^N\Bigl( \sup_{p \in \PP, h \in \HH} \absb{\CVaRhat^N_\alpha [C_p(h,t)] 
		\\
		& \qquad \qquad \qquad \qquad - \CVaR_\alpha[C_p(h,u)] } > \frac{\eps}{ \sqrt{\abs{\PP}}} \Bigr) 
	\\
	&\le 6 \abs{\PP} K \exp \Bigl(-\frac{\alpha \eps^2}{44 \abs{\PP} (M-m)^2} N \Bigr).
	\end{align*}
	This concludes the proof.
\end{proof}

\begin{remark}\longthmtitle{A suitable cover for the set $\HH$}\label{re:cover}
	{\rm
		Here we compute the number of points $K$, that denotes the cardinality of some set $\{\htil^1, \dots, \htil^K \} \subset \HH$, required to cover the set $\HH$ according to the conditions in the proof of Proposition~\ref{pr:exp-conv-F}. In particular, for all $h \in \HH$, there exists a point $\htil^{i(h)}$, $i(h) \in \until{K}$ such that
		\begin{align*}
			\frac{L}{\alpha} \norm{h-\htil^{i(h)}} \le \frac{\eps}{4}.
		\end{align*}
		That is, $\norm{h - \htil^{i(h)}} \le \frac{\eps \alpha}{4 L}$. From~\cite{wulecture14}, this is possible with 
		$\Bigl( \frac{3}{(\eps \alpha/ 4 L)} \Bigr)^{\abs{\PP}} \frac{\vol(\HH)}{\vol(B)}$
		number of points, where $\vol(\HH)$ is the volume of the set $\HH$ and $\vol(B)$ is the volume of the unit norm ball in $\real^{\abs{\PP}}$. Since $\vol(\HH) \le \diam(\HH)^{\abs{\PP}}$ and 
		\begin{align*}
			\vol(B) \ge \frac{2 \pi^{\abs{\PP}/2}}{\ceil{\abs{\PP}/2}!},
		\end{align*} 
		we get the desired upper estimate on $K$. 
	}
	\oprocend
\end{remark}

The main result is given below. The proof with minor modifications is as given in~\cite[Theorem 2.1]{xu2010saastochasticvi}. It follows from the uniform exponential convergence of $\Fhat^N_p$. 

\begin{theorem}\longthmtitle{Exponential convergence of $\SSCWEhat^N$ to $\SSCWE$}\label{th:exp-conv}
	Let Assumption~\ref{as:exp-conv} hold. 
	Then, for any sequence $\{ \hhat^N \in \SSCWEhat^N\}_{N =1}^\infty$, $\eps > 0$, and $N \in \naturalnumbers$, the following inequality holds  
	\begin{align*}
	\Pb^N \bigl(\dist(\hhat^N, \SSCWE) \le \eps \bigr) \ge 1- \gamma(\delta(\eps)) e^{-\beta(\delta(\eps))N},
	\end{align*}	
	where $\gamma$ and $\beta$ are given in~\eqref{eq:g-b} and $\map{\delta}{\realpositive}{\realpositive}$ is a map such that the pair $(\eps,\delta(\eps))$ satisfies the condition of Lemma~\ref{le:cont-sol-set}. 
\end{theorem}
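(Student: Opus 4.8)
The plan is to derive the stated tail bound by chaining the deterministic sensitivity estimate of Lemma~\ref{le:cont-sol-set} with the uniform exponential bound of Proposition~\ref{pr:exp-conv-F}, converting a deterministic implication into a statement about probabilities. First I would fix $\eps > 0$ and invoke Lemma~\ref{le:cont-sol-set} to obtain a threshold $\delta(\eps) > 0$ with the property that $\Db(\SSCWEhat^N, \SSCWE) \le \eps$ holds on the (sample-dependent) event
\begin{align*}
	\EE_N := \Bigl\{ \sup_{h \in \HH} \norm{\Fhat^N(h) - F(h)} \le \delta(\eps) \Bigr\}.
\end{align*}
This is exactly the map $\delta$ referenced in the statement, so the constants $\gamma(\delta(\eps))$ and $\beta(\delta(\eps))$ that appear in the conclusion arise by evaluating Proposition~\ref{pr:exp-conv-F} at $\delta(\eps)$.

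The key observation is then a purely geometric one. Since $\hhat^N \in \SSCWEhat^N$, the definition of the deviation yields $\dist(\hhat^N, \SSCWE) \le \sup_{y \in \SSCWEhat^N} \dist(y, \SSCWE) = \Db(\SSCWEhat^N, \SSCWE)$. Consequently, on the event $\EE_N$ one has $\dist(\hhat^N, \SSCWE) \le \Db(\SSCWEhat^N, \SSCWE) \le \eps$, which gives the event inclusion $\EE_N \subseteq \{ \dist(\hhat^N, \SSCWE) \le \eps \}$.

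With this inclusion in hand, the final step would be to pass to probabilities and apply Proposition~\ref{pr:exp-conv-F} with its free parameter set to $\delta(\eps)$. By monotonicity of $\Pb^N$ under the inclusion above,
\begin{align*}
	\Pb^N\bigl( \dist(\hhat^N, \SSCWE) \le \eps \bigr)
	&\ge \Pb^N(\EE_N) \\
	&= 1 - \Pb^N\Bigl( \sup_{h \in \HH} \norm{\Fhat^N(h) - F(h)} > \delta(\eps) \Bigr),
\end{align*}
and bounding the complementary probability by $\gamma(\delta(\eps)) e^{-\beta(\delta(\eps)) N}$ via Proposition~\ref{pr:exp-conv-F} produces exactly the claimed inequality for every $N \in \naturalnumbers$.

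I expect no genuine obstacle here, since the heavy lifting has already been delegated: the covering argument and union bound that generate the explicit constants $\gamma$ and $\beta$ live in Proposition~\ref{pr:exp-conv-F}, while the solution-set sensitivity is handled by Lemma~\ref{le:cont-sol-set}. The only point requiring care is that the argument must hold for an arbitrary sequence $\{\hhat^N\}$ of (possibly sample-dependent) approximate solutions; this is immediate because the inclusion $\EE_N \subseteq \{\dist(\hhat^N, \SSCWE) \le \eps\}$ follows deterministically from $\Db(\SSCWEhat^N, \SSCWE) \le \eps$, which bounds $\dist(\cdot, \SSCWE)$ uniformly over all elements of $\SSCWEhat^N$ at once, so no uniformity over the choice of $\hhat^N$ is lost.
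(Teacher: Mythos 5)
Your proposal is correct and follows essentially the same route as the paper's own proof: invoke Lemma~\ref{le:cont-sol-set} to translate the event $\sup_{h \in \HH} \norm{\Fhat^N(h) - F(h)} \le \delta(\eps)$ into $\dist(\hhat^N, \SSCWE) \le \eps$, then bound the complementary probability via Proposition~\ref{pr:exp-conv-F}. The only difference is cosmetic: you make explicit the step $\dist(\hhat^N,\SSCWE) \le \Db(\SSCWEhat^N,\SSCWE)$, which the paper leaves implicit when passing from the deviation bound in Lemma~\ref{le:cont-sol-set} to the pointwise distance statement.
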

\begin{proof}
	Consider any $\eps > 0$. By Lemma~\ref{le:cont-sol-set}, if $\sup_{h \in \HH} \norm{\Fhat^N(h) - F(h)} \le \delta(\eps)$, then $\dist(\hhat^N, \SSCWE) \le \eps$. From Proposition~\ref{pr:exp-conv-F}, for any $\delta(\eps) > 0$, there exist $\gamma(\delta(\eps))$ and $\beta(\delta(\eps))$, given in~\eqref{eq:gamma} and~\eqref{eq:beta}, respectively, such that
	\begin{align*}
		\Pb^N \Bigl( \sup_{h \in \HH} \norm{\Fhat^N(h) - F(h)} > \delta(\eps) \Bigr) \le \gamma(\delta(\eps)) e^{-\beta(\delta(\eps)) N}
	\end{align*}
	for all $N$. The proof follows by using the above facts and the following set of inequalities 
	\begin{align*}
		\Pb^N (\dist(\hhat^N,&\SSCWE)   \! \le \! \eps)  \! \ge \Pb^N \! \Bigl( \sup_{h \in \HH} \norm{\Fhat^N(h) - F(h)} \! \le \! \delta(\eps) \Bigr) 
		\\
		& = 1 - \Pb^N \Bigl( \sup_{h \in \HH} \norm{\Fhat^N(h) - F(h)} > \delta(\eps) \Bigr). \hspace*{-1ex}\qed
	\end{align*}
	\renewcommand{\qedsymbol}{}
\end{proof}

\begin{remark}\longthmtitle{Sample guarantees for approximating $\SSCWE$ with $\SSCWEhat^N$}\label{re:sample}
	{\rm
		Theorem~\ref{th:exp-conv} implies that if one wants $\dist(\SSCWEhat^N,\SSCWE) \le \eps$ with confidence $1-\zeta$, where $\zeta \in (0,1)$ is a small positive number, then one would require at most  
		\begin{align*}
			N(\zeta,\eps) & = \frac{1}{\beta(\delta(\eps))} \log \Bigl( \frac{\gamma(\delta(\eps))}{\zeta} \Bigr)
			\\
			& =  \frac{44 \abs{\PP} (M-m)^2}{\alpha \delta(\eps)^2} \Bigl( \log \Bigl( \frac{ 3 \abs{\PP} \ceil{\abs{\PP}/2}! }{\pi^{\abs{\PP}/2} \zeta} \Bigr) 
			\\
			& \qquad \qquad + \abs{\PP} \log \Bigl( \frac{12 L \diam(\HH)}{\delta(\eps) \alpha} \Bigr) \Bigr)
		\end{align*} 
		number of samples of the uncertainty. Due to the exponential rate, a good feature of this sample guarantee is that $N$ depends on the accuracy $\zeta$ logarithmically. That is, one can obtain high confidence bounds with fewer samples. However, the sample size grows poorly with many other parameters, especially, the accuracy of the estimate $\eps$ and the number of paths. Further, note that to obtain an accurate sample guarantee, one needs to estimate $\delta(\cdot)$ which depends on the regularity of the cost functions. Improving the sample complexity for specific cost functions such as, piecewise affine, is part of our future work.
	}
	\oprocend
\end{remark}

\section{Simulation}\label{sec:sims}
Here we illustrate the method of sample average approximation for the computation of the CWE through an example. We consider a simple network with two nodes $\VV = \{A, B\}$ and five edges. The set of OD-pairs is $\{(A,B), (B,A)\}$. Three edges $\{1,2,3\}$ go from $A$ to $B$ and two $\{4,5\}$ go from $B$ to $A$. The set of edges form the available paths. The network and cost functions are adapted from \cite[Section 6.3]{xie2016robust}. The demand is $260$ from $A$ to $B$, and is $170$ from $B$ to $A$. The vector of cost functions is given by
\begin{equation*}
	C(h;u) = \begin{pmatrix}
	40h_1 + 20h_4 + 1000 + 3000u_1 \\
	60h_2 + 20h_5 + 950 \\
	80h_3 + 3000 \\
	8h_1 + 80h_4 + 1000 + 4000u_2 \\
	4h_2 + 100h_5 +1300
	\end{pmatrix}.
\end{equation*}
The uncertainty $u=(u_1,u_2)$ appears in an affine manner in the cost associated to edges $\{1, 4\}$. The support and distribution of both random variables is $[0,1]$ and uniform, respectively, and they are independent of each other.  We set $\alpha = 0.2$. This defines completely the routing game with uncertain costs. Since the uncertainty is additive in the costs, one can compute the $\CVaR$ of costs as
\begin{align*}
	&\begin{pmatrix} \CVaR_\alpha[C_1(h,u)] \\ \vdots \\ \CVaR_\alpha[C_5(h,u)] \end{pmatrix} 
	\\
	& \quad = \begin{pmatrix} 40h_1 + 20h_4 \\
	60h_2 + 20h_5  \\
	80h_3  \\
	8h_1 + 80h_4 \\
	4h_2 + 100h_5
	\end{pmatrix} 
	+ \begin{pmatrix}
	1000 + 3000 \CVaR_\alpha[u_1] \\ 950 \\ 3000 \\ 1000 + 4000\CVaR_\alpha [u_2] \\ 1300  
	\end{pmatrix}.
\end{align*}
The obtained cost functions are affine in the flows and so, the CWE is the solution of a linear complementarity problem (LCP)~\cite{xie2016robust}. Solving the LCP, which in this case is a convex optimization problem with quadratic cost and affine constraint, yields the unique CWE as $h^* = (89.52, 98.39, 72.09, 74.32, 95.68)$.

For the sample average approximation, we consider three scenarios with different number of samples, $N \in \{50, 500, 5000\}$. For each of these scenarios, we consider $500$ runs. Each run collects $N$ number of i.i.d samples of the uncertainty $u$, constructs the empirical $\CVaR$ costs, and computes the approximation of the CWE $\hhat^N$. Figure~\ref{fig:combined-cdf} illustrates our results. It plots the cumulative distribution function of the random variable $\norm{\hhat^N - h^*}$ as estimated using the $500$ runs. Note that the complete distribution moves to the left with increasing number of samples. This confirms our theoretical findings that as $N$ increases, the approximate solution $\hhat^N$ approaches the CWE almost surely.  

\begin{figure}
	\centering
	\includegraphics[width=0.75\linewidth]{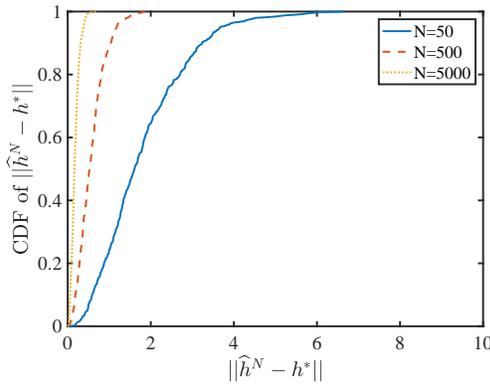}
	\caption{Plot demonstrates the convergence of the approximate solution $\hhat^N$ to the CVaR-based Wardrop equilibrium $h^*$ for the two-node five-edge example, see Section~\ref{sec:sims} for details. Each line corresponds to a different sample size and depicts the cumulative distribution of $\norm{\hhat^N-h^*}$ as obtained using $500$ runs. The lines move towards the origin as the number of samples increase depicting the convergence of $\hhat^N$ to $h^*$.}
	\label{fig:combined-cdf}
	\vspace*{-2ex}
\end{figure}

\section{Conclusions}
We have considered a nonatomic routing game with uncertain costs and defined the Wardrop equilibrium where agents opt for paths with least conditional value-at-risk. Given i.i.d samples of the uncertainty, we have investigated the statistical properties of the sample average approximation of the $\CVaR$-based Wardrop equilibrium. In particular, we have established the asymptotic consistency and the exponential convergence of the approximation scheme under suitable regularity conditions on the cost functions. Future work will involve exploring monotonicity of the deterministic VI problem formed using sample averages and designing efficient algorithms for solving it. We also wish to investigate other data-driven approaches, such as stochastic approximation routines, for computing the equilibrium. Finally, we plan to characterize the price of risk-aversion and the benefit, if any, of having heterogeneous risk-averseness of agents.

\end{document}